\begin{document}

\baselineskip=15.2pt

\numberwithin{equation}{section}

\newtheorem{theorem}{Theorem}[section]
\newtheorem{lemma}[theorem]{Lemma}
\newtheorem{proposition}[theorem]{Proposition}
\newtheorem{corollary}[theorem]{Corollary}

\theoremstyle{definition}
\newtheorem{remark}[theorem]{Remark}

\def\quot{/\!\!/}

\title[Character varieties of virtually nilpotent K\"ahler groups]{Character varieties
of virtually nilpotent K\"ahler groups and $G$--Higgs bundles}

\author[I. Biswas]{Indranil Biswas}

\address{School of Mathematics, Tata Institute of Fundamental
Research, Homi Bhabha Road, Bombay 400005, India}

\email{indranil@math.tifr.res.in}

\author[C. Florentino]{Carlos Florentino}

\address{Departamento Matem\'atica, Instituto Superior T\'ecnico, Av. Rovisco
Pais, 1049-001 Lisbon, Portugal}

\email{cfloren@math.ist.utl.pt}

\subjclass[2000]{20G20, 14J60}

\keywords{K\"ahler group, character variety, $G$--Higgs bundle, virtually
nilpotent group}

\date{}

\begin{abstract}
Let $G$ be a connected complex reductive affine algebraic group, and let $K\,\subset\, G$
be a maximal compact subgroup. Let $X$
be a compact connected K\"ahler manifold whose fundamental group $\Gamma$ is virtually
nilpotent. We prove that the character variety $\text{Hom}(\Gamma,\, G)/\!\!/G$
admits a natural strong deformation retraction to the subset $\text{Hom}(\Gamma,\, K)/K\,\subset\,
\text{Hom}(\Gamma,\, G)/\!\!/G$. The natural action of ${\mathbb C}^*$ on the
moduli space of $G$--Higgs bundles over $X$ extends to an action of $\mathbb C$. This
produces the above mentioned deformation retraction.
\end{abstract}
\maketitle

\section{Introduction}

Let $G$ be a complex reductive affine algebraic group, and let $\Gamma$ be a
finitely presentable group. Let $\mathcal{R}_{\Gamma}(G)\,:=\,{\rm Hom}(\Gamma,G)\quot G$
be the geometric invariant theoretic (GIT) quotient, of the space of all homomorphisms from
$\Gamma$ to $G$, for the conjugation action of $G$; it is known as the $G$--character variety of
$\Gamma$. These moduli spaces $\mathcal{R}_{\Gamma}(G)$ play
important roles in hyperbolic geometry \cite{CS}, the theory of bundles and
connections \cite{Si}, knot theory and quantum field theories \cite{Gu} (see also the
references in these papers).

Some particularly relevant cases of $\Gamma$ include, for instance, the fundamental
group of a compact connected K\"ahler manifold. These are called K\"ahler groups.
If $\Gamma$ is the fundamental group of a compact connected K\"ahler manifold $X$,
then the corresponding
character variety $\mathcal{R}_{\Gamma}(G)$ can be identified with
a certain moduli space of $G$--Higgs bundles over $X$ \cite{H,Si,BG}; this identification
is continuous but not holomorphic. Let $K$ be a maximal compact subgroup of $G$. The
above identification between $\mathcal{R}_{\pi_1(X,x_0)}(G)\,=\,
\mathcal{R}_{\Gamma}(G)$ and a moduli space of $G$--Higgs
bundles on $X$ is an extension of the identification between ${\rm Hom}(\Gamma,K)/ K$
and the moduli space of semistable principal $G$--bundles on $X$ with vanishing characteristic
classes of positive degrees \cite{RS}, \cite{Do}, \cite{UY}, \cite{NS}.

In investigations of the
topology of $\mathcal{R}_{\Gamma}(G)$ there are some notable situations
where the analogous orbit space $\mathcal{R}_{\Gamma}(K)\,:=\,{\rm Hom}(\Gamma,K)/K$
is a strong deformation retract of $\mathcal{R}_{\Gamma}(G)$.
This happens when $\Gamma$ is a free group \cite{FL} or a free
abelian group \cite{FL3,BF2,PS} or a nilpotent group \cite{B}.
It should be mentioned that such a deformation retraction is not to be expected for
arbitrary finitely presented groups
$\Gamma$, not even for general K\"ahler groups. For example, this fails for
surface groups \cite{BF1}.

In this article, we consider the case where $\Gamma$ is a virtually
nilpotent K\"ahler group. This means that $\Gamma$ is the fundamental group of a compact
connected K\"ahler manifold and it has a finite index
subgroup which is nilpotent. Since any finite group is
the fundamental group of a complex projective manifold \cite[p. 6,
Example 1.11]{ABCKT}, this class of groups include all finite groups.

Our approach uses non-abelian Hodge theory. When applied to a general
compact connected K\"ahler manifold $X$, the non-abelian Hodge theory
provides a natural correspondence between the flat
principal $G$--bundles over $X$ and a certain class of $G$--Higgs bundles
on $X$. If $X$ is a smooth complex projective variety, then this correspondence
sends the flat principal $G$--bundles on $X$ to the semistable $G$--Higgs bundles
on $X$ with vanishing characteristic classes of positive degrees. More generally, if
$X$ is a compact connected K\"ahler manifold, then the same correspondence remains valid
once semistability is replaced by pseudostability.

Let $X$ be a compact connected K\"ahler manifold such that its fundamental group $\Gamma\, :=\,
\pi_1(X,x_0)$ is virtually nilpotent. Take any homomorphism
$\rho\, :\, \Gamma\, \longrightarrow\, G$. Let $(F_G\, ,\theta)$
be the pseudostable $G$--Higgs bundle on $X$ associated to $\rho$.
(If $X$ is a complex projective manifold, then $(F_G\, ,\theta)$
is semistable.) We prove that the underlying principal $G$--bundle $F_G$ is
pseudostable (see Proposition \ref{prop2}); in \cite{FGN}, a similar result is
proved for Higgs $G$--bundles on elliptic curves. In view
of the earlier mentioned identification between $\mathcal{R}_{\Gamma}(G)$ and
a moduli space of $G$--Higgs bundles, this produces a multiplication
action of $\mathbb C$ on $\mathcal{R}_{\Gamma}(G)$ using the multiplication
of Higgs fields by the scalars. The action of $1\,\in\, \mathbb C$ is
the identity map of $\mathcal{R}_{\Gamma}(G)$, and the action of $0$ is a retraction of
$\mathcal{R}_{\Gamma}(G)$ to ${\rm Hom}(\Gamma,K)/K$; this property of the action of $0$
is deduced from the
identification between ${\rm Hom}(\Gamma,K)/K$ and the moduli space of pseudostable
principal $G$--bundles on $X$ with vanishing characteristic classes of positive
degrees \cite{NS}, \cite{Ra}. Also, the action of every element
of $\mathbb C$ fixes the subset ${\rm Hom}(\Gamma,K)/K$ pointwise. Therefore, this
action of $\mathbb C$ on $\mathcal{R}_{\Gamma}(G)$
produces a strong deformation retraction of $\mathcal{R}_{\Gamma}(G)$
to ${\rm Hom}(\Gamma,K)/K$ (see Theorem \ref{thm2}).
 
\section{K\"ahler groups and Flat $G$--bundles}

Let $G$ be a linear algebraic group defined over $\mathbb C$. A Borel subgroup of $G$ is a maximal
Zariski closed connected solvable subgroup of $G$. Any two Borel subgroups of $G$ are
conjugate \cite[p. 134, \S~21.3, Theorem]{Hu}. Let $\Gamma$ be a finitely presentable group.

\subsection{Homomorphisms of virtually nilpotent K\"ahler groups}

Recall that $\Gamma$ is called \emph{virtually solvable} (respectively,
\emph{virtually nilpotent}) if there is a finite index subgroup
\[
\Gamma_{1}\subset\Gamma
\]
such that $\Gamma_{1}$ is a solvable (respectively, nilpotent) group. 

\begin{lemma}
\label{lem:lem1}Let $\Gamma$ be a virtually solvable group. Then,
for any homomorphism $\rho\,:\,\Gamma\,\longrightarrow\, G$, there
is a finite index subgroup \[
\Gamma_{0}\,\subset\,\Gamma\, ,\]
and also a Borel subgroup $B\,\subset\, G$, such that $\rho(\Gamma_{0})\,\subset\, B$.
\end{lemma}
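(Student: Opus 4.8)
The plan is to produce $\Gamma_0$ by intersecting the given solvable finite index subgroup with a subgroup cut out by the identity component of a suitable Zariski closure. Since $\Gamma$ is virtually solvable, fix a finite index subgroup $\Gamma_{1}\,\subset\,\Gamma$ that is solvable. I would first pass to the Zariski closure $H\,:=\,\overline{\rho(\Gamma_{1})}\,\subset\, G$ of the image. The key input here is the standard fact that the Zariski closure of an (abstract) solvable subgroup of a linear algebraic group is again solvable \cite{Hu}; thus $H$ is a Zariski closed solvable subgroup of $G$, though it need not be connected.

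Next I would replace $H$ by its identity component $H^{0}$. Being a closed subgroup of the solvable group $H$, the component $H^{0}$ is itself connected, Zariski closed and solvable. By the definition of a Borel subgroup as a maximal Zariski closed connected solvable subgroup, together with the fact that every Zariski closed connected solvable subgroup is contained in a maximal one, there is a Borel subgroup $B\,\subset\, G$ with $H^{0}\,\subset\, B$. This supplies the Borel subgroup required by the statement.

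It remains to locate the finite index subgroup $\Gamma_{0}$. Since $H$ is a linear algebraic group, its identity component $H^{0}$ is normal and of finite index in $H$. I would therefore set
\[
\Gamma_{0}\,:=\,\{\gamma\,\in\,\Gamma_{1}\,:\,\rho(\gamma)\,\in\, H^{0}\}\, ,
\]
which is the preimage of $H^{0}$ under the restriction $\rho|_{\Gamma_{1}}\,:\,\Gamma_{1}\,\longrightarrow\, H$. Because $[H:H^{0}]\,<\,\infty$, this subgroup has finite index in $\Gamma_{1}$, and since $[\Gamma:\Gamma_{1}]\,<\,\infty$ we conclude that $[\Gamma:\Gamma_{0}]\,<\,\infty$. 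By construction $\rho(\Gamma_{0})\,\subset\, H^{0}\,\subset\, B$, which completes the argument.

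The only genuinely delicate point is the passage through the possible disconnectedness of $H$: the Zariski closure of $\rho(\Gamma_{1})$ is solvable but may have several components, so one cannot directly place $\rho(\Gamma_{1})$ inside a Borel subgroup. Shrinking $\Gamma_{1}$ to $\Gamma_{0}\,=\,(\rho|_{\Gamma_{1}})^{-1}(H^{0})$ is exactly what repairs this, and the finiteness of $[H:H^{0}]$ guarantees that the finite index property is preserved. Everything else reduces to the cited structure theory of solvable subgroups and Borel subgroups of $G$.
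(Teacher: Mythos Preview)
Your proof is correct and follows essentially the same route as the paper: take the Zariski closure $H$ of $\rho(\Gamma_1)$, pass to its identity component $H^0\subset B$, and set $\Gamma_0=(\rho|_{\Gamma_1})^{-1}(H^0)$, which is exactly the paper's $\rho^{-1}(H_0)\cap\Gamma_1$. The only cosmetic difference is that the paper records the intermediate fact that the index of $\Gamma_0$ in $\Gamma_1$ equals the number of connected components of $H$, whereas you argue the finiteness of the index directly from $[H:H^0]<\infty$.
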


\begin{proof}
Since $\Gamma$ is virtually solvable, there is a solvable subgroup
$\Gamma_{1}\,\subset\,\Gamma$ of finite index. Let $H$ denote the
Zariski closure of the image $\rho(\Gamma_{1})$.
In particular, $H$ is an algebraic subgroup
of $G$. Moreover, $H$ is a solvable subgroup of $G$ because $\Gamma_{1}$
is solvable. Let \[
H_{0}\,\subset\, H\]
 be the connected component of $H$ containing the identity element.
Since $H_{0}$ is a connected solvable subgroup of $G$, there is
a Borel subgroup $B\,\subset\, G$ with \[
H_{0}\,\subset\, B\,.\]

The group $H$ has only finitely many connected components because it is
algebraic. This implies that \[
\Gamma'\,:=\, H_{0}\bigcap\rho(\Gamma_{1})\,\subset\,\rho(\Gamma_{1})\]
is a finite index subgroup of $\rho(\Gamma_{1})$. Indeed, the index
of $\Gamma'$ in $\rho(\Gamma_{1})$ coincides with the number of connected
components of $H$.

Now define \[
\Gamma_{0}\,:=\,\rho^{-1}(\Gamma')\bigcap\Gamma_{1}\,=\,
\rho^{-1}(H_{0})\bigcap\Gamma_{1}\,\subset\,\Gamma_{1}.\]
The index of the subgroup $\Gamma_{0}\,\subset\,\Gamma_{1}$ coincides
with the index of the subgroup $\Gamma'\,\subset\,\rho(\Gamma_{1})$.
In particular, $\Gamma_{0}$ is a subgroup of $\Gamma_{1}$ of finite index.
Since $\Gamma_1$ is a subgroup of $\Gamma$ of finite index,
we now conclude that $\Gamma_{0}$ is a finite index subgroup of $\Gamma$.
We also have $\rho(\Gamma_{0})\,\subset\, H_{0}\,\subset\, B$, so the
proof is complete.
\end{proof}

By a \emph{K\"ahler group} we mean a finitely presentable group isomorphic to the
fundamental group $\pi_{1}(X,x_{0})$ of some compact connected K\"ahler manifold $X$, where
$x_0\in X$ is a base point.

\begin{remark}\label{er1}
Suppose that $\Gamma$ is a virtually solvable K\"ahler group, and $\Gamma_{1}\subset\Gamma$
is a solvable subgroup of finite index. Then $\Gamma_{1}$ is a solvable
K\"ahler group, because finite index subgroups of K\"ahler groups are also K\"ahler groups.
By a recent result of Delzant (see \cite{D}), $\Gamma_{1}$
is virtually nilpotent. So there is a subgroup $\Gamma_{2}\subset\Gamma_{1}$
of finite index such that $\Gamma_{2}$ is nilpotent. Therefore, $\Gamma$
itself is virtually nilpotent.
\end{remark}

In view of Remark \ref{er1}, while considering virtually solvable K\"ahler groups, we can
restrict ourselves to virtually nilpotent K\"ahler groups.

The following proposition is immediate from Lemma \ref{lem:lem1}.

\begin{proposition}\label{pro:virt-nil}
Let $X$ be a compact connected K\"ahler manifold with a virtually nilpotent
fundamental group $\pi_{1}(X,x_{0})$. Let $\rho\,:\,\pi_{1}(X,x_{0})\,\longrightarrow\, G$
be a homomorphism. Then there is a finite index subgroup \[
\Gamma_{0}\,\subset\,\pi_{1}(X,x_{0})\]
and a Borel subgroup $B\,\subset\, G$, such that $\rho(\Gamma_{0})\,\subset\, B$.
\end{proposition}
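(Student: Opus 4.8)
The plan is to deduce this directly from Lemma~\ref{lem:lem1}, since the statement is purely group-theoretic and the class of virtually nilpotent groups sits inside the class of virtually solvable groups. First I would record the elementary observation that every nilpotent group is solvable: a nilpotent group admits a terminating lower central series, which is in particular a subnormal series with central, hence abelian, quotients, so such a group is solvable. Consequently, if $\Gamma\,:=\,\pi_1(X,x_0)$ is virtually nilpotent, say $\Gamma_2\,\subset\,\Gamma$ is a nilpotent subgroup of finite index, then $\Gamma_2$ is a solvable subgroup of finite index, and hence $\Gamma$ is virtually solvable.

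Having established that $\Gamma$ is virtually solvable, I would simply invoke Lemma~\ref{lem:lem1} for the given homomorphism $\rho\,:\,\Gamma\,\longrightarrow\, G$. The lemma produces a finite index subgroup $\Gamma_0\,\subset\,\Gamma$ together with a Borel subgroup $B\,\subset\, G$ such that $\rho(\Gamma_0)\,\subset\, B$, which is precisely the conclusion sought. No further argument is required, so the proposition follows at once.

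I do not anticipate any real obstacle here: the entire content of the statement is carried by Lemma~\ref{lem:lem1}, whose proof in turn rests on the standard facts that the Zariski closure of a solvable group is solvable, that a connected solvable algebraic subgroup is contained in some Borel subgroup, and that an algebraic group has only finitely many connected components. It is worth noting that neither the K\"ahler hypothesis on $X$ nor the topology of $X$ enters this particular deduction; these are recorded only to fix the group $\Gamma\,=\,\pi_1(X,x_0)$ as the relevant fundamental group. The K\"ahler structure becomes essential only later, when one passes from these representations to $G$--Higgs bundles.
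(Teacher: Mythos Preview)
Your proposal is correct and matches the paper's own treatment exactly: the paper states that the proposition is immediate from Lemma~\ref{lem:lem1}, and your argument simply unpacks the implication ``virtually nilpotent $\Rightarrow$ virtually solvable'' before invoking that lemma. Your observation that the K\"ahler hypothesis plays no role at this step is also in line with the paper.
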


\subsection{Flat principal $G$--bundles and $G$--Higgs bundles}

Let $G$ be a connected linear algebraic group
defined over $\mathbb{C}$. Let $X$ be a compact connected K\"ahler manifold
equipped with a K\"ahler form $\omega$. There is an equivalence between
the category of pseudostable Higgs $G$--bundles on $X$ with vanishing characteristic classes
of positive degrees and
the category of flat principal $G$--bundles on $X$ \cite[p. 20, Theorem 1.1]{BG}.

Let $p\,:\,Y\,\longrightarrow\, X$ be a finite \'etale
covering with $Y$ connected. So $(Y\, ,p^*\omega)$ is a compact
connected K\"ahler manifold.

\begin{lemma}
\label{lem:pullback}
Let $(E_{G}\, ,\nabla)$ be a flat principal $G$--bundle on $X$, and let
$(F_{G}\, ,\theta)$ be the pseudostable Higgs
$G$--bundle over $X$ associated to $(E_{G}\, ,\nabla)$. Then the pullback
$(p^{*}F_{G}\, ,p^{*}\theta)$ is isomorphic to
the pseudostable Higgs $G$--bundle over $Y$ associated to the flat principal
$G$--bundle $(p^{*}E_{G}\, ,p^{*}\nabla)$.
\end{lemma}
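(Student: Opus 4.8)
The plan is to prove the lemma through the harmonic-metric description of the non-abelian Hodge correspondence and to exploit that a finite \'etale covering is a local isometry for the pulled-back K\"ahler metric. Recall that the equivalence of \cite[p.~20, Theorem 1.1]{BG} attaches to the flat bundle $(E_{G}\, ,\nabla)$ a harmonic reduction of structure group $h$ to the maximal compact subgroup $K\,\subset\, G$; the associated Higgs bundle $(F_{G}\, ,\theta)$ is then built from the triple $(E_{G}\, ,\nabla\, ,h)$ by taking as the holomorphic structure of $F_{G}$ the $(0,1)$--part of the $h$--unitary component of $\nabla$, and as $\theta$ the $(1,0)$--part of the $h$--self-adjoint component of $\nabla$. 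Each of these operators is defined pointwise from $\nabla$, from $h$, and from the complex structure of $X$.

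First I would pull back the harmonic reduction, setting $h_{Y}\,:=\,p^{*}h$, which is a reduction of $p^{*}E_{G}$ to $K$, and observe that $p\,:\,(Y\, ,p^{*}\omega)\,\longrightarrow\,(X\, ,\omega)$ is a local biholomorphic isometry. Consequently $p^{*}$ intertwines the decomposition of $\nabla$ into its $h$--unitary and $h$--self-adjoint parts with the corresponding decomposition of $p^{*}\nabla$ relative to $h_{Y}$, and it commutes with the splitting of $1$--forms into types. Hence the holomorphic structure and Higgs field manufactured from $(p^{*}E_{G}\, ,p^{*}\nabla\, ,h_{Y})$ are precisely $p^{*}\bar\partial_{F_{G}}$ and $p^{*}\theta$; in other words, this reconstitutes $(p^{*}F_{G}\, ,p^{*}\theta)$.

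The main point, and the step I expect to require the most care, is to verify that $h_{Y}$ is genuinely a harmonic reduction for $(p^{*}E_{G}\, ,p^{*}\nabla)$ on $(Y\, ,p^{*}\omega)$, so that $(p^{*}F_{G}\, ,p^{*}\theta)$ really is the Higgs bundle furnished by the correspondence on $Y$. Since harmonicity is a local condition (the vanishing of a tension-type field, an elliptic second-order equation) and $p$ is a local isometry, it is preserved by $p^{*}$; I would make this concrete on universal covers. As $p$ is finite \'etale, the composite $\widetilde{Y}\,\longrightarrow\, Y\,\longrightarrow\, X$ is a simply connected covering of $X$, hence is canonically the universal covering $\widetilde{X}\,\longrightarrow\, X$, and $\pi_{1}(Y)$ is realized as the finite index subgroup of $\pi_{1}(X,x_{0})$ consisting of deck transformations over $Y$. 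Writing $\rho\,:\,\pi_{1}(X,x_{0})\,\longrightarrow\, G$ for the monodromy of $\nabla$, the reduction $h$ corresponds to a $\rho$--equivariant harmonic map $f\,:\,\widetilde{X}\,\longrightarrow\, G/K$. The monodromy of $(p^{*}E_{G}\, ,p^{*}\nabla)$ is the restriction $\rho|_{\pi_{1}(Y)}$, the same map $f$ is automatically $\rho|_{\pi_{1}(Y)}$--equivariant, and the lift of $p^{*}\omega$ to $\widetilde{X}$ coincides with the lift of $\omega$, so $f$ stays harmonic. Thus $h_{Y}\,=\,p^{*}h$ is the harmonic reduction attached to $(p^{*}E_{G}\, ,p^{*}\nabla)$.

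Finally, by the uniqueness built into the equivalence of \cite[Theorem 1.1]{BG}, the pseudostable Higgs $G$--bundle on $Y$ associated to $(p^{*}E_{G}\, ,p^{*}\nabla)$ is determined up to isomorphism; combining this with the two preceding paragraphs identifies it with $(p^{*}F_{G}\, ,p^{*}\theta)$, which completes the proof. One also records that pullback by the finite \'etale map $p$ preserves pseudostability and the vanishing of the characteristic classes of positive degree, consistent with $(p^{*}F_{G}\, ,p^{*}\theta)$ lying in the target category of the correspondence on $Y$.
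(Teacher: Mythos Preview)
Your approach---pulling back the harmonic reduction and checking that harmonicity is preserved because $p$ is a local isometry---is exactly the paper's idea, and your universal-cover argument for why $p^{*}h$ remains harmonic is more detailed than the paper's one-line assertion. The gap is one of scope: you assert that the equivalence of \cite[Theorem~1.1]{BG} furnishes a harmonic reduction $h$ for $(E_{G},\nabla)$, but a harmonic metric exists only when $G$ is reductive and the monodromy is completely reducible. In the lemma as stated, $G$ is an arbitrary connected linear algebraic group and $(E_{G},\nabla)$ is an arbitrary flat bundle, so in general no such $h$ is available and your construction does not get off the ground.

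The paper handles this by first treating the reductive, completely reducible case via harmonic metrics (exactly as you do) and, symmetrically, by noting that a Hermitian--Yang--Mills--Higgs structure on a polystable Higgs bundle also pulls back; it then observes that the correspondence of \cite{BG} for general $G$ and general flat bundles is \emph{constructed from} Simpson's correspondence \cite[Lemma~3.5]{Si}, so compatibility with finite \'etale pullback is inherited from the core case. To close the gap you should either restrict your argument explicitly to completely reducible flat $G$--bundles with $G$ reductive, or add this reduction step to the polystable/completely reducible situation.
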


\begin{proof}
First assume that $G$ is reductive. We recall that
a flat $G$--connection on $X$ is called irreducible if it does not admit
a reduction of structure group to a proper parabolic subgroup of $G$.
A flat $G$--connection is called completely reducible if it admits a reduction
of structure group to a Levi subgroup of a parabolic subgroup of $G$ such
that the reduction is irreducible.

Let $(E'\, ,\nabla')$ be a completely reducible flat
principal $G$--bundle on $X$.
Suppose that $$h\,=\, E_K\, \subset\, E'$$
is a harmonic metric on $(E'\, ,\nabla')$. 
Then clearly $p^{*}h\,=\, p^*E_K
\, \subset\, p^{*}E'$
is a harmonic metric on the flat principal $G$--bundle $(p^{*}E'\, ,p^{*}\nabla')$
on $Y$.

On the other hand, if $h_{1}$ is a Hermitian
structure on a polystable Higgs $G$--bundle $(F'\, ,\theta')$ on
$X$ that satisfies the Yang-Mills-Higgs equation, then the pulled
back Hermitian structure $p^{*}h_{1}$ on $(p^{*}F'\, ,p^{*}\theta')$
also satisfies the Yang-Mills-Higgs equation. From this it follows
immediately that the correspondence in \cite[p. 36, Lemma 3.5]{Si}
is compatible with taking finite \'etale coverings.

The correspondence in \cite[p. 20, Theorem 1.1]{BG} for general $G$
is constructed from the correspondence
in \cite[p. 36, Lemma 3.5]{Si}. Therefore, it is also compatible
with taking finite \'etale coverings. In particular, the pseudostable
Higgs $G$--bundle on $Y$ associated to the flat principal $G$--bundle
$(p^{*}E_{G}\, ,p^{*}\nabla)$ coincides with the pullback $(p^{*}F_{G}\, ,p^{*}\theta)$,
where $(F_{G}\, ,\theta)$ as before is the pseudostable Higgs
$G$--bundle over $X$ associated to $(E_{G}\, ,\nabla)$.
\end{proof}

\section{Semistability of holomorphic $G$--bundles underlying Flat $G$--bundles}

{}From now on, $G$ will be assumed to be connected and reductive.

We start, for simplicity, with the projective case. So, let $X$
denote a connected smooth complex projective variety such that $\pi_{1}(X,x_{0})$
is virtually nilpotent.
Note that for any finite index subgroup of $\Gamma_0\,\subset\,\pi_{1}(X,x_{0})$,
the covering of $X$ associated to $\Gamma_0$ is also a connected smooth complex projective
variety.

To define (semi)stability of bundles on $X$, we need to fix a polarization on $X$ (first Chern
class of an ample line bundle), in order to compute the degree and slope of torsionfree
coherent sheaves on $X$. However, for bundles on $X$ with vanishing
characteristic classes of positive degrees, the notion of (semi)stability 
is independent of the choice of polarization. Since we are dealing
with bundles with vanishing characteristic classes of positive degrees, we
will not refer to a particular choice of polarization.

\begin{proposition}
\label{prop1}
Let $X$ be a connected smooth complex projective variety such that
$\pi_{1}(X,x_{0})$ is virtually nilpotent. Let $(F_G\, ,\theta)$ be a
semistable $G$--Higgs bundle on $X$ whose characteristic classes of positive degrees vanish. 
Then the holomorphic principal $G$--bundle $F_G$
is semistable.
\end{proposition}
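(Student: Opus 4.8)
The plan is to reduce the semistability of the principal bundle $F_G$ to the semistability of an associated vector bundle, which can be controlled through the reduction to a Borel subgroup supplied by Proposition \ref{pro:virt-nil}. Since $(F_G,\theta)$ is semistable with vanishing characteristic classes of positive degrees, the correspondence of \cite{BG} (in the projective case, where pseudostable means semistable) shows that it arises from a flat principal $G$--bundle $(E_G,\nabla)$, and hence from a homomorphism $\rho\,:\,\pi_1(X,x_0)\,\longrightarrow\, G$. By Proposition \ref{pro:virt-nil} there is a finite index subgroup $\Gamma_0\,\subset\,\pi_1(X,x_0)$ and a Borel subgroup $B\,\subset\, G$ with $\rho(\Gamma_0)\,\subset\, B$. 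Let $p\,:\,Y\,\longrightarrow\, X$ be the connected finite \'etale covering corresponding to $\Gamma_0$; by Lemma \ref{lem:pullback} the pseudostable Higgs $G$--bundle associated to $\rho|_{\Gamma_0}$ is exactly $(p^{*}F_G\, ,p^{*}\theta)$.

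First I would record that it suffices to prove that $p^{*}F_G$ is semistable. Indeed, under the finite \'etale covering $p$ of degree $d$, the degrees of subsheaves, and of the line bundles associated to parabolic reductions, all get multiplied by $d\,>\,0$ (using the pulled back polarization), so any reduction of $F_G$ to a parabolic subgroup violating the semistability inequality would pull back to one violating it for $p^{*}F_G$. Hence semistability of $p^{*}F_G$ forces that of $F_G$, and the rest of the argument takes place on $Y$, where the representation factors through $B$.

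Next I would pass to the adjoint bundle, using the known equivalence (for reductive $G$ over a smooth projective base) that a principal $G$--bundle $E$ is semistable if and only if its adjoint vector bundle $\operatorname{ad}(E)\,=\,E\times^{G}\mathfrak{g}$ is semistable. The advantage is that $\operatorname{ad}(p^{*}E_G)$ is the flat vector bundle attached to $\operatorname{Ad}\circ\rho|_{\Gamma_0}$, whose image lies in $\operatorname{Ad}(B)$. Since $B$ is connected solvable, the Lie--Kolchin theorem furnishes a complete flag $0\,=\,W_0\,\subset\, W_1\,\subset\,\cdots\,\subset\, W_m\,=\,\mathfrak{g}$ of $B$--submodules, with $B$ acting on each line $W_i/W_{i-1}$ through a character $\chi_i$ of $B$. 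This flag is invariant under $\rho(\Gamma_0)$, and so, transported through the non-abelian Hodge correspondence, it yields a filtration of the holomorphic vector bundle $\operatorname{ad}(p^{*}F_G)$ whose successive quotients are the line bundles $F_B\times^{\chi_i}\mathbb{C}$ underlying the Higgs line bundles attached to the one--dimensional representations $\chi_i\circ\rho|_{\Gamma_0}$. Each such line bundle has degree zero, being the holomorphic bundle underlying the Higgs line bundle of a flat (hence degree zero) line bundle, the correspondence preserving the first Chern class.

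Finally, an iterated extension of degree zero, and hence slope zero semistable, line bundles is itself semistable of slope zero, so $\operatorname{ad}(p^{*}F_G)$ is semistable; the adjoint criterion then gives that $p^{*}F_G$ is semistable, and the descent observation above gives that $F_G$ is semistable. The main obstacle I anticipate is justifying the two compatibility inputs entirely on the Higgs/holomorphic side: that the Lie--Kolchin flag of $\mathfrak{g}$ descends through the correspondence to a \emph{genuine} holomorphic filtration of $\operatorname{ad}(p^{*}F_G)$ with the named graded pieces, and that each graded line bundle indeed has degree zero. Establishing the adjoint--bundle criterion in the required generality, together with the functoriality of the non-abelian Hodge correspondence under the homomorphisms $B\,\hookrightarrow\, G$ and $\chi_i\,:\,B\,\longrightarrow\,\mathbb{C}^{*}$, is where the real work lies; once these are in place the slope bookkeeping is routine.
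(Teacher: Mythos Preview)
Your proposal is correct and follows essentially the same route as the paper: pass to the finite \'etale cover $Y$ where the monodromy lands in a Borel $B$, filter $\mathfrak{g}$ by one-dimensional $B$--submodules, obtain a filtration of $\operatorname{ad}(p^{*}F_G)$ by holomorphic subbundles with degree-zero line bundle quotients, conclude semistability of the adjoint bundle, and then invoke the adjoint criterion and descent. The paper resolves the compatibility issue you flag by explicitly introducing the $B$--Higgs bundle $(F_B,\theta_B)$ on $Y$ corresponding to the flat $B$--bundle, so that $p^{*}F_G$ is the extension of structure group of $F_B$ and the filtration $W_i\,=\,F_B\times^{B}V_i$ is visibly holomorphic with $c_1(W_i/W_{i-1})\,=\,0$ by \cite[Theorem~1.1]{BG}.
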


\begin{proof}
Let $(E_G\, ,\nabla)$ be the flat principal $G$--bundle over $X$ corresponding to the
given semistable $G$--Higgs bundle $(F_G\, ,\theta)$.
Suppose that $(E_{G}\, ,\nabla)$
is given by the homomorphism (its monodromy representation) \begin{equation}
\rho\,:\,\pi_{1}(X,x_{0})\,\longrightarrow\, G\,.\label{e2}\end{equation}
Since $\pi_{1}(X,x_{0})$ is virtually nilpotent, from Proposition
\ref{pro:virt-nil} we know that there is a finite index subgroup
\[
\Gamma_{0}\,\subset\,\pi_{1}(X,x_{0})\]
 and a Borel subgroup $B\,\subset\, G$, such that \begin{equation}
\rho(\Gamma_{0})\,\subset\, B\,.\label{e3}\end{equation}
 Let
\begin{equation}\label{cp}
p\,:\, Y\,\longrightarrow\, X
\end{equation}
be the finite \'etale covering corresponding to the subgroup $\Gamma_{0}$ in
\eqref{e3}. We note that $Y$ is a connected smooth complex projective variety. 

Let $y_{0}\,\in\, p^{-1}(x_{0})\,\subset\, Y$ be the base point of
the covering $Y$. Consider the homomorphism \[
\rho'\,:\,\pi_{1}(Y,y_{0})\,=\,\Gamma_{0}\,\stackrel{\rho\vert_{\Gamma_{0}}}{\longrightarrow}\, B\]
(see \eqref{e3}). Let $(E_{B}\, ,\nabla^{B})$ be the flat principal
$B$--bundle on $Y$ associated to $\rho'$. Let $(F_{B}\, ,\theta_{B})$
be the semistable $B$--Higgs on $Y$ corresponding to
$(E_{B}\, ,\nabla^{B})$. It should be clarified that from
\cite[p. 26, Proposition 2.4]{BG} we know that a Higgs principal bundle on $X$ with
vanishing characteristic classes of positive degrees is semistable if and only
if it is pseudostable. 

Note that $(p^{*}E_{G}\, ,p^{*}\nabla)$ is identified with the flat
principal $G$--bundle on $Y$ obtained by extending the structure group of
the flat principal $B$--bundle $(E_{B}\, ,\nabla^{B})$ using the
inclusion of $B$ in $G$. The correspondence in \cite[p. 20, Theorem 1.1]{BG}
is compatible with extensions of structure group. Therefore, using Proposition
\ref{lem:pullback} we know that the pullback $(p^{*}F_{G}\, ,p^{*}\theta)$ is
identified with the $G$--Higgs bundle obtained by extending the structure
group of the Higgs $B$--bundle $(F_{B}\, ,\theta_{B})$ using the
inclusion of $B$ in $G$. 

For any holomorphic character \[
\chi\,:\, B\,\longrightarrow\,{\mathbb{C}}^{*}\]
 of $B$, we have 
\begin{equation}
c_{1}(F_{B}\times^{\chi}{\mathbb{C}})\,=\,0\, ,\label{eq:0}\end{equation}
where $F_{B}\times^{\chi}{\mathbb{C}}$ is the holomorphic line bundle
on $Y$ associated to the principal $B$--bundle $F_{B}$ for the
character $\chi$ \cite[p. 20, Theorem 1.1]{BG}.

Let $\mathfrak{g}$ denote the Lie algebra of $G$. We will consider
$\mathfrak{g}$ as a $B$--module using the adjoint action. Since $B$ is solvable, there
is a filtration of $B$--modules
\begin{equation}
0\,=\, V_{0}\,\subset\, V_{1}\,\subset\, V_{2}\,\subset\,\cdots\,\subset\,
V_{d-1}\,\subset\, V_{d}\,=\,\mathfrak{g}\, ,\label{e4}
\end{equation}
where $d\,=\,\dim_{\mathbb{C}}\mathfrak{g}$, such that each successive quotient
$V_i/V_{i-1}$, $1\,\leq\, i\, \leq\, d$, is a $B$--module of dimension one.

Let
$$
W\,:=\, F_{B}\times^{B}\mathfrak{g} \,\longrightarrow\, Y
$$
be the holomorphic vector
bundle on $Y$ associated to the principal $B$--bundle $F_{B}$ for
the above $B$--module $\mathfrak{g}$. We note that this holomorphic vector
bundle $W$ is identified with the adjoint vector bundle \[
(p^{*}F_{G})\times^{G}{\mathfrak{g}}\,=\,\text{ad}(p^{*}F_{G})\,=\, p^{*}\text{ad}(F_{G})\]
for the principal $G$--bundle $p^{*}F_{G}$, because $p^{*}F_{G}$
the the extension of structure group of $F_{B}$ constructed using the inclusion
of $B$ in $G$. So, we write
\begin{equation}
W\,=\,\text{ad}(p^{*}F_{G})\,=\, p^{*}\text{ad}(F_{G})\,.\label{e-1}\end{equation}
 For $0\,\leq\, i\,\leq\, d$, let \[
W_{i}\,:=\, F_{B}\times^{B}V_{i}\,\longrightarrow\, Y\]
 be the holomorphic vector bundle associated to the principal $B$--bundle
$F_{B}$ for the $B$--module $V_{i}$ in \eqref{e4}. The filtration
of $B$--modules in \eqref{e4} produces a filtration of $W$ by holomorphic
vector subbundles
\begin{equation}
0\,=\, W_{0}\,\subset\, W_{1}\,\subset\, W_{2}\,\subset\,\cdots\,\subset\, W_{d-1}
\,\subset\, W_{d}\,=\, W\,=\, p^{*}\text{ad}(F_{G})\, ,\label{e5}
\end{equation}
 where $\text{rank}(W_{i})\,=\, i$ (see \eqref{e-1}).

For any $1\,\leq\, i\,\leq\, d$, the line bundle $W_{i}/W_{i-1}$
on $Y$ coincides with the one associated to the
principal $B$--bundle $F_B$ for the $B$--module $V_i/V_{i-1}$. Therefore, from
\eqref{eq:0} we conclude that
\begin{equation}
c_{1}(W_{i}/W_{i-1})\,=\,0\label{e6}\end{equation}
for all $1\,\leq\, i\,\leq\, d$.

{}From \eqref{e5} and \eqref{e6} we conclude that the vector bundle $p^{*}\text{ad}(F_{G})$
is semistable. This implies that $\text{ad}(F_{G})$ is semistable.
Indeed, if a subsheaf $V'\,\subset\,\text{ad}(F_{G})$ contradicts
the semistability of $\text{ad}(F_{G})$, then the pullback $p^{*}V'$
contradicts the semistability of $p^{*}\text{ad}(F_{G})$. Since $\text{ad}(F_{G})$
is semistable, we conclude that the principal $G$--bundle $F_{G}$
is semistable \cite[p. 214, Proposition 2.10]{AB}. 
\end{proof}

Let $M$ be a compact connected K\"ahler manifold equipped with a K\"ahler form.
A Higgs vector bundle $(E\, ,\theta)$ over $M$ is called
{\it pseudostable} if $E$ admits a filtration by holomorphic subbundles
$$
0\, =\, F_0\,\subset\, F_1 \, \subset\, F_2\,\subset\,
\cdots \, \subset\, F_{n-1}\,\subset\, F_n\, =\, E
$$
such that
\begin{enumerate}
\item{} $\theta(F_i) \, \subset\, F_i\otimes {\Omega}^1_M$
for all $i\, \in\, [1\, , n]$,

\item{} for each integer $i\, \in\, [1\, , n]$, the Higgs vector bundle defined
by the quotient $F_i/F_{i-1}$ equipped with the Higgs field induced by $\theta$
is stable, and

\item{} $\text{degree}(F_{1})/\text{rank}(F_{1})\, =\, \text{degree}(F_{2})/\text{rank}(F_{2})
\, =\, \cdots \,=\,\text{degree}(F_{n})/\text{rank}(F_{n})$, where the degree is defined using
the K\"ahler form on $M$.
\end{enumerate}
A pseudostable Higgs vector bundle is semistable (see \cite{BG}). A holomorphic vector bundle
$E$ on $M$ is called pseudostable if the Higgs vector bundle $(E\, ,0)$ is pseudostable.
A $G$--Higgs bundle $(E_G\, ,\theta)$ on $M$
is called {\it pseudostable} if the adjoint vector
bundle $\text{ad}(E_G)\,=\, E_G\times^G \mathfrak{g}$ equipped with the Higgs field
induced by $\theta$ is pseudostable. A holomorphic principal $G$--bundle $E_G$ on $M$
is called pseudostable if the $G$--Higgs bundle $(E_G\, ,0)$ is pseudostable.

\begin{proposition}
\label{prop2} 
Let $X$ be a compact connected K\"ahler manifold such that
$\pi_{1}(X,x_{0})$ is virtually nilpotent. Let $(F_G\, ,\theta)$ be a
pseudostable $G$--Higgs bundle on $X$ with zero characteristic classes of positive degrees. 
Then the holomorphic principal $G$--bundle $F_G$
is pseudostable.
\end{proposition}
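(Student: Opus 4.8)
The plan is to reproduce the reduction in the proof of Proposition~\ref{prop1}, which remains valid when $X$ is only K\"ahler, and then to upgrade the resulting semistability to pseudostability by means of the equivalence in \cite[p.~26, Proposition~2.4]{BG}. First I would take the flat principal $G$--bundle $(E_G\, ,\nabla)$ on $X$ corresponding to $(F_G\, ,\theta)$ under \cite[p.~20, Theorem~1.1]{BG}, with monodromy $\rho\,:\,\pi_1(X,x_0)\,\longrightarrow\, G$. By Proposition~\ref{pro:virt-nil} there are a finite index subgroup $\Gamma_0\,\subset\,\pi_1(X,x_0)$ and a Borel subgroup $B\,\subset\, G$ with $\rho(\Gamma_0)\,\subset\, B$. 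Let $p\,:\,Y\,\longrightarrow\, X$ be the corresponding finite \'etale covering; then $(Y\, ,p^*\omega)$ is again a compact connected K\"ahler manifold, which is exactly why the argument survives in the absence of a projective hypothesis. The restriction $\rho\vert_{\Gamma_0}$ factors through $B$ and defines a flat $B$--bundle on $Y$ whose associated pseudostable $B$--Higgs bundle $(F_B\, ,\theta_B)$ recovers $(p^*F_G\, ,p^*\theta)$ after extension of structure group along $B\,\hookrightarrow\, G$, using Lemma~\ref{lem:pullback} together with the compatibility of the correspondence with extension of structure group.

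Next I would construct the filtration precisely as in Proposition~\ref{prop1}. Every character $\chi\,:\,B\,\longrightarrow\,{\mathbb C}^*$ gives a line bundle $F_B\times^\chi{\mathbb C}$ with $c_1\,=\,0$. Viewing $\mathfrak{g}$, the Lie algebra of $G$, as a $B$--module via the adjoint action, the solvability of $B$ supplies a filtration of $B$--modules with one--dimensional successive quotients; applying $F_B\times^B(-)$ yields a filtration $0\,=\, W_0\,\subset\, W_1\,\subset\,\cdots\,\subset\, W_d\,=\, p^*\text{ad}(F_G)$ by holomorphic subbundles whose quotients $W_i/W_{i-1}$ are line bundles with $c_1\,=\,0$, hence of degree zero for $p^*\omega$. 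As a successive extension of degree zero line bundles, all of the same slope, $p^*\text{ad}(F_G)$ is semistable; and since a subsheaf contradicting the semistability of $\text{ad}(F_G)$ would pull back to one contradicting the semistability of $p^*\text{ad}(F_G)$, the bundle $\text{ad}(F_G)$ is semistable on $X$. By \cite[p.~214, Proposition~2.10]{AB} this forces the principal $G$--bundle $F_G$ to be semistable.

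It remains to upgrade semistability to pseudostability. Since $F_G$ has vanishing characteristic classes of positive degrees, $(F_G\, ,0)$ is a semistable $G$--Higgs bundle with vanishing characteristic classes of positive degrees (with $\theta\,=\,0$ every parabolic reduction is automatically compatible, so semistability of the $G$--Higgs bundle $(F_G\, ,0)$ coincides with semistability of the principal $G$--bundle $F_G$). Applying the equivalence of \cite[p.~26, Proposition~2.4]{BG}, valid on a compact K\"ahler manifold, between semistability and pseudostability for Higgs bundles with vanishing characteristic classes of positive degrees, I conclude that $(F_G\, ,0)$ is pseudostable; by definition this says exactly that $F_G$ is pseudostable, completing the proof.

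The step I expect to be the main obstacle is the passage from $Y$ back to $X$. Pseudostability is an \emph{existence}-of-filtration condition, and the filtration $\{W_i\}$ produced above is constructed on the cover $Y$ and has no reason to descend to $X$; unlike semistability, pseudostability does not descend transparently along a finite \'etale covering. The mechanism that avoids descending a filtration is precisely the equivalence \cite[p.~26, Proposition~2.4]{BG}: one descends only the easily transported semistability and then reconstitutes pseudostability on $X$ from the vanishing characteristic classes. Checking that this equivalence, as well as Lemma~\ref{lem:pullback} and \cite[p.~214, Proposition~2.10]{AB}, is genuinely available in the compact K\"ahler (rather than smooth projective) setting is the delicate point.
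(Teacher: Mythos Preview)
Your proposal is correct and follows essentially the same route as the paper's proof, which simply instructs one to rerun the argument of Proposition~\ref{prop1} in the K\"ahler setting and observe that $\text{ad}(F_G)$ comes out pseudostable, whence $F_G$ is pseudostable by definition. The only cosmetic difference is in the final step: the paper concludes pseudostability of $\text{ad}(F_G)$ directly (semistable with vanishing characteristic classes, implicitly via \cite[Proposition~2.4]{BG}) and then invokes the definition, whereas you first pass through semistability of $F_G$ via \cite[Proposition~2.10]{AB} and then apply \cite[Proposition~2.4]{BG} to $(F_G,0)$; both orderings are valid, and your explicit identification of the descent issue and its resolution through \cite[Proposition~2.4]{BG} is exactly the point the paper leaves tacit.
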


\begin{proof}
The proof of Proposition \ref{prop2} is very similar to the proof of Proposition
\ref{prop1}. Since $(F_G\, ,\theta)$ is pseudostable with vanishing characteristic classes of
positive degrees, it corresponds to a flat $G$--bundle on $X$ \cite[p. 20, Theorem 1.1]{BG}.
Let $(E_G\, ,\nabla)$ be the flat $G$--bundle corresponding to $(F_G\, ,\theta)$. Construct $\rho$
as in \eqref{e2}. Define $\Gamma_0$ as in \eqref{e3}, and consider $p$ as in \eqref{cp}. Now the
proof proceeds exactly as the proof of Proposition \ref{prop1} does. The only point to note is that 
the adjoint vector bundle $\text{ad}(F_G)$, which we get at the end, is pseudostable. 
But this means that $F_G$ is pseudostable (see the above definition).
\end{proof}

\section{Deformation retraction of character varieties}

As before, $X$ is a compact connected K\"ahler manifold such that $\Gamma\,:=\,\pi_{1}(X,x_{0})$
is virtually nilpotent. Since $\Gamma$ is a finitely presented group, and $G$ is an affine algebraic
group, the representation space \[
\widetilde{\mathcal{R}}_{\Gamma}(G)\,:=\,\text{Hom}(\Gamma,\, G)\]
 is an affine algebraic scheme over $\mathbb{C}$. The reductive group
$G$ acts on $\widetilde{\mathcal{R}}_{\Gamma}(G)$ via the conjugation
action of $G$ on itself. The geometric invariant theoretic quotient
\begin{equation}
{\mathcal{R}}_{\Gamma}(G)\,:=\,\widetilde{\mathcal{R}}_{\Gamma}(G)/\!\!/G\label{e8}\end{equation}
is also an affine algebraic scheme over $\mathbb{C}$.

Fix a maximal compact subgroup \[
K\,\subset\, G\,.\]
 Let \begin{equation}
{\mathcal{R}}_{\Gamma}(K)\,:=\,\text{Hom}(\Gamma,\, K)/K\label{e9}\end{equation}
 be the space of all equivalence classes of homomorphisms from $\Gamma=\pi_{1}(X,x_{0})$
to $K$. The inclusion of $K$ in $G$ produces an inclusion \begin{equation}
{\mathcal{R}}_{\Gamma}(K)\,\hookrightarrow\,{\mathcal{R}}_{\Gamma}(G)\, ,\label{e10}\end{equation}
 where ${\mathcal{R}}_{\Gamma}(G)$ and ${\mathcal{R}}_{\Gamma}(K)$
are constructed in \eqref{e8} and \eqref{e9} respectively (see \cite[Proposition 4.5 and Theorem 4.3]{FL2}).

\begin{theorem}\label{thm1}
Let $X$ be a connected smooth complex projective variety with
a virtually nilpotent fundamental group $\Gamma\,=\,\pi_{1}(X,x_{0})$.
Let $G$ be a reductive linear algebraic group.
The character variety ${\rm Hom}(\Gamma,\, G)/\!\!/G$
admits a strong deformation retraction to the subset \[
{\rm Hom}(\Gamma,\, K)/K\,\subset\,{\rm Hom}(\Gamma,\, G)/\!\!/G\,.\]
\end{theorem}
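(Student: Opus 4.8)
The plan is to realize the strong deformation retraction via the $\mathbb{C}$--action on the moduli space of $G$--Higgs bundles that is obtained by scaling the Higgs field. First I would invoke the non-abelian Hodge correspondence to identify ${\mathcal R}_\Gamma(G)\,=\,\mathrm{Hom}(\Gamma,G)\quot G$ with the moduli space of semistable $G$--Higgs bundles on $X$ with vanishing characteristic classes of positive degrees; here one uses that $X$ is a smooth projective variety so that the correspondence of \cite[p.~20, Theorem 1.1]{BG} applies and semistability coincides with pseudostability \cite[p.~26, Proposition 2.4]{BG}. Under this identification the scalar multiplication $t\cdot(F_G\, ,\theta)\,=\,(F_G\, ,t\theta)$ for $t\,\in\,\mathbb{C}^*$ is the standard ${\mathbb C}^*$--action; the key point, which I would emphasize, is that by Proposition \ref{prop1} the underlying bundle $F_G$ is already semistable, so the limit as $t\,\to\,0$ is well-defined and lands in $(F_G\, ,0)$, giving an extension of the ${\mathbb C}^*$--action to an action of the monoid $\mathbb{C}$.

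Next I would set up the homotopy explicitly. Transporting the $\mathbb{C}$--action back to ${\mathcal R}_\Gamma(G)$ through the (continuous, non-holomorphic) homeomorphism of non-abelian Hodge theory produces a continuous map
\begin{equation}
H\,:\,\mathbb{C}\times{\mathcal R}_\Gamma(G)\,\longrightarrow\,{\mathcal R}_\Gamma(G)\label{ehom}
\end{equation}
with $H(1,-)$ the identity, since $1\cdot(F_G\, ,\theta)\,=\,(F_G\, ,\theta)$. Restricting to the real interval $[0,1]\,\subset\,\mathbb{C}$ gives the desired homotopy, and I would verify the three retraction axioms in turn. Continuity of $H$ follows from continuity of the scaling action on the moduli space together with continuity of the Hodge correspondence in both directions. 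That $H(1,-)\,=\,\mathrm{id}$ is immediate.

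The substantive step is identifying the image of $H(0,-)$ and checking it fixes ${\mathcal R}_\Gamma(K)$ pointwise. For $t\,=\,0$ the Higgs field is killed, so $(F_G\, ,\theta)$ maps to $(F_G\, ,0)$; by Proposition \ref{prop1} the bundle $F_G$ is a semistable (equivalently pseudostable) principal $G$--bundle with vanishing characteristic classes of positive degrees, and the Ramanathan--Narasimhan--Seshadri correspondence \cite{NS}, \cite{Ra} identifies the moduli space of such bundles with $\mathrm{Hom}(\Gamma,K)/K$. Hence $H(0,-)$ retracts ${\mathcal R}_\Gamma(G)$ onto the subset ${\mathcal R}_\Gamma(K)$ of \eqref{e10}. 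Finally, a point of ${\mathcal R}_\Gamma(K)$ corresponds to a flat $G$--bundle admitting a reduction to $K$, whose associated Higgs bundle has $\theta\,=\,0$; since scaling a zero Higgs field does nothing, $H(t,-)$ fixes every such point for all $t$, which is exactly the strong deformation retraction condition.

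I expect the main obstacle to be the continuity of $H$ at $t\,=\,0$: the non-abelian Hodge homeomorphism is not holomorphic, so one cannot simply cite holomorphicity of the ${\mathbb C}^*$--action, and one must argue that the harmonic-metric/Yang--Mills--Higgs solutions vary continuously as the Higgs field is scaled down to zero, ensuring the limit $t\,\to\,0$ exists and matches the polystable $G$--bundle $F_G$ uniformly over the moduli space. The crucial input making this work, and the reason Proposition \ref{prop1} is needed, is that $F_G$ is semistable, so no Harder--Narasimhan jumping occurs in the limit and the limiting object stays inside the moduli space rather than escaping to a non-semistable bundle.
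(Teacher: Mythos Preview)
Your approach is essentially identical to the paper's: use the non-abelian Hodge correspondence, invoke Proposition~\ref{prop1} to know that the underlying bundle $F_G$ is semistable, scale the Higgs field by $\lambda\in\mathbb{C}$, and transport back to ${\mathcal R}_\Gamma(G)$ to obtain the retraction. The only notable difference is presentational: the paper does not phrase the value at $\lambda=0$ as a limit but observes directly that since $F_G$ is semistable, $(F_G,\lambda\theta)$ is a semistable Higgs $G$--bundle for \emph{every} $\lambda\in\mathbb{C}$ (including $\lambda=0$), so the map $\Phi$ is defined outright on all of $\mathbb{C}\times{\mathcal R}_\Gamma(G)$ and continuity follows immediately from continuity of the scaling action on the Higgs moduli space together with continuity of the Hodge bijection; your concern about Harder--Narasimhan jumping in a limit is thus unnecessary once Proposition~\ref{prop1} is in hand.
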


\begin{proof}
We will construct a continuous map
\begin{equation}
\Phi\,:\,{\mathbb{C}}\times{\mathcal{R}}_{\Gamma}(G)\,\longrightarrow\,
{\mathcal{R}}_{\Gamma}(G)\,.\label{e11}
\end{equation}
Take any $\rho\,\in\,{\mathcal{R}}_{\Gamma}(G)$. Choose a homomorphism
$\widetilde{\rho}\,\in\, \widetilde{\mathcal{R}}_{\Gamma}(G)$ (see \eqref{e8})
that projects to $\rho$. Let $(E_{G}\, ,\nabla)$
be the flat principal $G$--bundle on $X$ associated to $\widetilde{\rho}$. Let
$(F_{G}\, ,\theta)$ be the semistable $G$--Higgs bundle on $X$ associated
to the flat principal $G$--bundle $(E_{G}\, ,\nabla)$. From Proposition
\ref{prop1} we know that $F_{G}$ is semistable. Therefore, $(F_{G}\, ,\lambda\cdot\theta)$
is a semistable Higgs $G$--bundle for every $\lambda\,\in\,\mathbb{C}$.
Hence $(F_{G}\, ,\lambda\cdot\theta)$ corresponds to a flat principal
$G$--bundle on $X$. Let $(E^\lambda_{G}\, ,\nabla^\lambda)$ be the flat principal
$G$--bundle on $X$ corresponding to $(F_{G}\, ,\lambda\cdot\theta)$. The map $\Phi$ in \eqref{e11}
sends the point $(\lambda\, ,\rho)\,\in\,{\mathbb{C}}\times{\mathcal{R}}_{\Gamma}(G)$
to the monodromy representation of the flat connection $(E^\lambda_{G}\, ,\nabla^\lambda)$.
The bijection between ${\mathcal{R}}_{\Gamma}(G)$ and the moduli space of semistable
$G$--Higgs bundles on $X$ with vanishing characteristic classes of positive degrees is continuous.
Also, the action of $\mathbb C$ on this moduli space of semistable
$G$--Higgs bundles is continuous. Therefore, $\Phi$ is a continuous map.

Clearly, $\rho\,\longmapsto\,\Phi(1\, ,\rho)$ is the identity map
of ${\mathcal{R}}_{\Gamma}(G)$. We have $\Phi(\lambda\, ,\rho)\,=\,
\rho$ for every $\rho\, \in\, {\mathcal{R}}_{\Gamma}(K)$ and $\lambda\,\in\,
\mathbb C$. Also \[
\rho\,\longmapsto\,\Phi(0\, ,\rho)\]
is a retraction to the subset ${\mathcal{R}}_{\Gamma}(K)$ in \eqref{e10}. 
\end{proof}

\begin{theorem}
\label{thm2} Let $X$ be a compact connected K\"ahler manifold with
a virtually nilpotent fundamental group $\Gamma\,=\,\pi_{1}(X,x_{0})$.
Let $G$ be a reductive linear algebraic group.
Then the character variety ${\rm Hom}(\Gamma,\, G)/\!\!/G$
admits a strong deformation retraction to the subset \[
{\rm Hom}(\Gamma,\, K)/K\,\subset\,{\rm Hom}(\Gamma,\, G)/\!\!/G\,.\]
\end{theorem}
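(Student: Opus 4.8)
The plan is to mimic the proof of Theorem~\ref{thm1} almost verbatim, replacing every appearance of ``semistable'' by ``pseudostable'' and every appeal to Proposition~\ref{prop1} by an appeal to Proposition~\ref{prop2}. Concretely, I would construct the same continuous map
\begin{equation}
\Phi\,:\,{\mathbb{C}}\times{\mathcal{R}}_{\Gamma}(G)\,\longrightarrow\,
{\mathcal{R}}_{\Gamma}(G)\,.\label{e11b}
\end{equation}
Starting from $\rho\,\in\,{\mathcal{R}}_{\Gamma}(G)$, I lift to $\widetilde\rho$, form the flat principal $G$--bundle $(E_G\, ,\nabla)$, and pass to the associated pseudostable $G$--Higgs bundle $(F_G\, ,\theta)$ via the correspondence of \cite[p. 20, Theorem 1.1]{BG}, which in the K\"ahler setting uses pseudostability rather than semistability.

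The crucial structural input is Proposition~\ref{prop2}: it guarantees that the underlying bundle $F_G$ is itself pseudostable. First I would record that, because $F_G$ is pseudostable, the scaled pair $(F_G\, ,\lambda\cdot\theta)$ is a pseudostable $G$--Higgs bundle with vanishing characteristic classes of positive degrees for every $\lambda\,\in\,{\mathbb C}$; indeed, scaling the Higgs field preserves the defining filtration of the adjoint bundle together with all the slope conditions in the definition of pseudostability. Hence each $(F_G\, ,\lambda\cdot\theta)$ corresponds, again through \cite[p. 20, Theorem 1.1]{BG}, to a flat principal $G$--bundle $(E^\lambda_G\, ,\nabla^\lambda)$ on $X$, and I define $\Phi(\lambda\, ,\rho)$ to be its monodromy representation. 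Continuity of $\Phi$ follows exactly as before, from the continuity of the bijection between ${\mathcal{R}}_{\Gamma}(G)$ and the moduli space of pseudostable $G$--Higgs bundles with vanishing characteristic classes of positive degrees, together with the continuity of the scaling action of ${\mathbb C}$ on that moduli space.

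It then remains to verify the three defining properties of a strong deformation retraction. The identity property $\Phi(1\, ,\rho)\,=\,\rho$ is immediate since scaling by $1$ is trivial. The property $\Phi(\lambda\, ,\rho)\,=\,\rho$ for all $\rho\,\in\,{\mathcal{R}}_{\Gamma}(K)$ holds because a representation into $K$ corresponds to a polystable $G$--bundle with zero Higgs field, so scaling $\theta\,=\,0$ changes nothing. Finally, $\rho\,\longmapsto\,\Phi(0\, ,\rho)$ lands in ${\mathcal{R}}_{\Gamma}(K)$: the Higgs bundle $(F_G\, ,0)$ corresponds to a flat unitary structure, invoking the identification between ${\rm Hom}(\Gamma,\, K)/K$ and the moduli space of pseudostable principal $G$--bundles with vanishing characteristic classes of positive degrees \cite{NS}, \cite{Ra}.

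I expect the only genuine subtlety, and hence the main point requiring care, to be the passage from the projective hypothesis of Proposition~\ref{prop1} to the K\"ahler setting: all references to (semi)stability and to the non-abelian Hodge correspondence must be replaced by their pseudostable analogues valid over compact K\"ahler manifolds, and the existence of the scaling action on the K\"ahler moduli space must be asserted through the pseudostable version of \cite[p. 20, Theorem 1.1]{BG}. Once Proposition~\ref{prop2} supplies pseudostability of $F_G$, however, the argument is formally identical to that of Theorem~\ref{thm1}, so I would simply transcribe that proof with these substitutions.
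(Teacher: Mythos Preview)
Your proposal is correct and matches the paper's approach exactly: the paper's entire proof of Theorem~\ref{thm2} is the single sentence ``In view of Proposition~\ref{prop2}, the proof of Theorem~\ref{thm2} is identical to the proof of Theorem~\ref{thm1},'' and you have spelled out precisely that transcription with the semistable/pseudostable substitution. If anything, you have supplied more detail than the authors themselves.
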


In view of Proposition \ref{prop2}, the proof of Theorem \ref{thm2} is identical
to the proof of Theorem \ref{thm1}.

\begin{remark}
In Theorem \ref{thm1}, the assumption that $\pi_{1}(X,x_{0})$ is virtually nilpotent
is only used in deducing the following: if $(E_G\, ,\theta)$ is a semistable $G$--Higgs
bundle on the complex projective manifold $X$ such that all the characteristic classes of $E_G$ of
positive degree vanish, then the principal $G$--bundle $E_G$ is semistable. Similarly, in
Theorem \ref{thm2}, the assumption that $\pi_{1}(X,x_{0})$ is virtually nilpotent
is only used in deducing the following: if $(E_G\, ,\theta)$ is a pseudostable $G$--Higgs
bundle on the compact connected K\"ahler manifold $X$ such that all the characteristic classes of
$E_G$ of positive degree vanish, then the principal $G$--bundle $E_G$ is pseudostable. It is natural
to ask which other complex projective varieties (or compact connected K\"ahler manifolds) satisfy
this condition on $G$--Higgs bundles.
\end{remark}

\section*{Acknowledgements}

We thank the referee for helpful comments.
The first author is supported by the J. C. Bose fellowship. The
second author is partially supported by FCT (Portugal) through the projects
EXCL/MAT-GEO/0222/2012, PTDC/MAT/120411/2010 and PTDC/MAT-GEO/0675/2012.


\end{document}